\newtheorem{theorem}{Theorem}[section]
\newtheorem{proposition}[theorem]{Proposition}
\newtheorem*{Richman}{Richman's Theorem}
\newtheorem*{RandomTurnHex}{Random-Turn Hex Theorem}
\theoremstyle{definition}
\def\<{\ensuremath{\langle}}
\def\>{\ensuremath{\rangle}}
\begin{document}
\title{Artificial intelligence for Bidding Hex}
\author[Payne]{Sam Payne} 
\author[Robeva]{Elina Robeva} 
\address{Department of Mathematics, Bldg 380, Stanford University, Stanford, CA 94305} \email{spayne@stanford.edu} \email{erobeva@stanford.edu} 

\thanks{Supported by Clay Mathematics Institute and a Stanford University VPUE grant.}

\begin{abstract}
We present a Monte Carlo algorithm for efficiently finding near optimal moves and bids in the game of Bidding Hex.  The algorithm is based on the recent solution of Random-Turn Hex by Peres, Schramm, Sheffield, and Wilson together with Richman's work connecting random-turn games to bidding games. \end{abstract}
\maketitle

\section{Introduction}
Hex is well-known for the simplicity of its rules and the complexity of its play.  Nash's strategy-stealing argument shows that a winning strategy for the first player exists, but finding such a strategy is intractable by current methods on large boards.  It is not known, for instance, whether the center hex is a winning first move on an odd size board.  The development of artificial intelligence for Hex is a notoriously rich and challenging problem, and has been an active area of research for over thirty years \cite{Davis75, Nishizawa76, Anshelevich00, Cazenave01, Anshelevich02, Anshelevich02b, RasmussenMaire04}, yet the best programs play only at the level of an intermediate human \cite{MelisHayward03}.  Complete analysis of Hex is essentially intractable; the problem of determining which player has a winning strategy from a given board position is PSPACE-complete \cite{Reisch81}, and the problem of determining whether a given empty hex is \emph{dead}, or irrelevant to the outcome of the game, is NP-complete \cite{BHJvR07}. Some recent research has focused on explicit solutions for small boards \cite{HBJKPvR04, HBJKPvR}, but it is unclear whether such techniques will eventually extend to the standard $11 \times 11$ board.

Bidding Hex is a variation on Hex in which, instead of alternating moves, the players bid for the right to move.   Suppose, for example, that Alice plays against Bob, and both start with 100 chips.  If Alice bids seventeen for the first move and Bob bids nineteen, then Bob gives nineteen chips to Alice and makes a move.  Now Alice has 119 chips and Bob has 81, and they bid for the second move.  The total number of chips in the game remains fixed, and the chips have no value at the end of the game---the goal is simply to win the game of Hex.  The main goal of this paper is to present an efficient algorithm for near-optimal play of Bidding Hex.  We have implemented this algorithm and it is overwhelmingly effective---undefeated against human opponents.  Code is available from the authors on request.

\section{Bidding games}

Bidding games are variations on traditional two-player games in which, instead of alternating moves, the players bid for the right to move (or for the right to determine who moves next, in situations where moving may not be desirable).  Richman developed an elegant theory of such games with real-valued bidding in the late 1980s; his theory is presented in \cite{LLPU, LLPSU}.  For recreational play, integer valued bidding is preferred.  Jay Bhat has developed Bidding Tit-Tac-Toe and Bidding Hex for online play with discrete bidding; see 
\[
\mbox{\textsf{http:/\!/apps.facebook.com/biddingttt/} \ and \ \textsf{http:/\!/apps.facebook.com/biddinghex/}}
\]
respectively.  Bidding Chess has also achieved some popularity among fans of Chess variations \cite{Beasley08, Beasley08b}.

For mathematical purposes, the coarseness of integer bidding with small numbers of chips creates additional subtleties.  For instance, the set of optimal first moves in Bidding Tic-Tac-Toe depends on the total number of chips---the center is the unique optimal first move when the total number of chips is greater than 26, but the center is not an optimal first move when the total number of chips is five \cite[Theorem~6.17]{DevelinPayne}.  

One of Richman's insights was the fundamental connection between bidding games with real-valued bidding and random-turn games in which players flip a coin to determine who moves next.  Say $G$ is a finite, loop-free game played by Alice and Bob.  Let $R(G)$ be the critical threshold, between zero and one, such that Alice has a winning strategy for $G$ with real-valued bidding if her proportion of the total bidding resources is greater than $R(G)$, and she does not have a winning strategy if her proportion is less than $R(G)$.  Let $P(G)$ be the probability that Alice wins $G$ as a random-turn game, assuming optimal play.  Intuitively, one expects that if the game favors Alice then $R(G)$ will be closer to zero and $P(G)$ will be closer to one, but the precise relation discovered by Richman remains surprising.

\begin{Richman}
Let $G$ be a finite, loop-free game.  Then
\[
R(G) = 1-P(G).
\]
\end{Richman}

\noindent Furthermore, the set of optimal moves for $G$ with real-valued bidding is the same as the set of optimal moves for random-turn play, and optimal bids may be determined as follows.  For any position $v$ in $G$, let $G_v$ denote the game played starting from $v$.  Define
\[
R^+(v) = \max \{ R(G_w) \ | \ \mbox{ Bob can move from $v$ to $w$} \}\]
and
\[
R^-(v) = \min \{ R(G_{w'}) \ | \ \mbox{ Alice can move from $v$ to $w'$} \}.
\]
The function $R$ is then determined by the relation
\[
R(G_v) = \big(R^+(v) + R^-(v)\big)/2,
\]
together with the conditions that $R(G_v)$ is equal to zero if $v$ is a winning position for Alice, and is equal to one if $v$ is a winning position for Bob.  From position $v$, an optimal bid for both players is $\delta(v) = \big|R^+(v) - R^-(v)\big| \big/ 2$,which may be thought of, roughly speaking, as the value of moving from position $v$.

Since $R^+(v)$ and $R^-(v)$ can be interpreted, through Richman's Theorem, as probabilities of winning certain random-turn games, results and insights from random-turn games can be applied directly to bidding games with real-valued bidding.  For discrete bidding, such probabilistic methods are also useful, though less directly.  For any fixed positive $\epsilon$, if Alice's proportion of the total number of chips is at least $R(G) + \epsilon$ and the total number of chips is sufficiently large with respect to $G$ and $\epsilon$, then Alice has a winning strategy in which all of her moves are optimal for real-valued bidding and hence also for random-turn play \cite[Theorem~3.10]{DevelinPayne}.  The bounds on the number of chips required are astronomical for most interesting games and grow exponentially with $1/\epsilon$ and the number of possible turns in the game, but in practice following real-valued bidding strategy is highly effective, even when the number of chips is small.

\section{Hex background}

The game of Hex never ends in a draw.  In other words, the only way a player can block the other is by completing a winning chain.  We sketch a proof of this fact following \cite{Gale79} in the form of an algorithm for determining the winner of a completed game.  This algorithm is a key ingredient in the implementation of step (1) in our artificial intelligence program for Bidding Hex, presented in Section~\ref{program}.

Suppose Alice and Bob place amber and blue hexes, respectively, until the board is full.  Color the regions northwest and southeast of the board amber, and the regions northeast and southwest of the board blue, so Alice wins if she makes a chain of amber hexes connecting the two amber regions, and Bob wins if he creates a chain of blue hexes connecting the two blue regions.

\smallskip

\begin{center}
\includegraphics{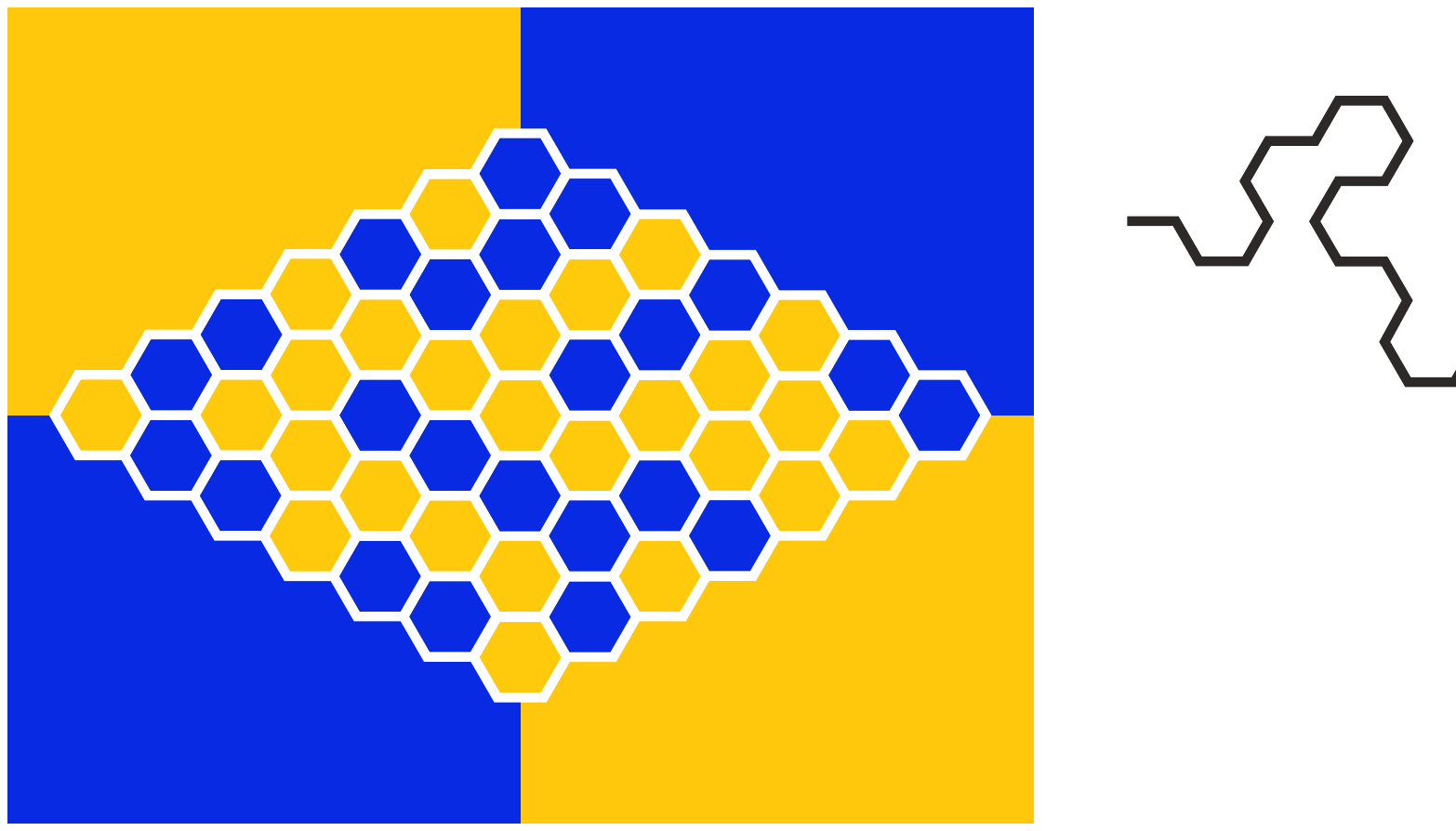}
\end{center}

\smallskip

Now the boundary between the amber and blue regions of the board is a finite union of loops, plus two paths whose four endpoints are at the north, south, east, and west corners of the board.  The directed path beginning at the west corner of the board always has amber on its left and blue on its right, so its endpoint must be at either the north or the south corner of the board.  If its endpoint is at the north corner then the blue hexes adjacent to this path on the right form a winning chain for Bob; if its endpoint is south, as in the figure below, then the amber hexes adjacent to this path on the left form a winning chain for Alice.

\smallskip

\begin{center}
\includegraphics{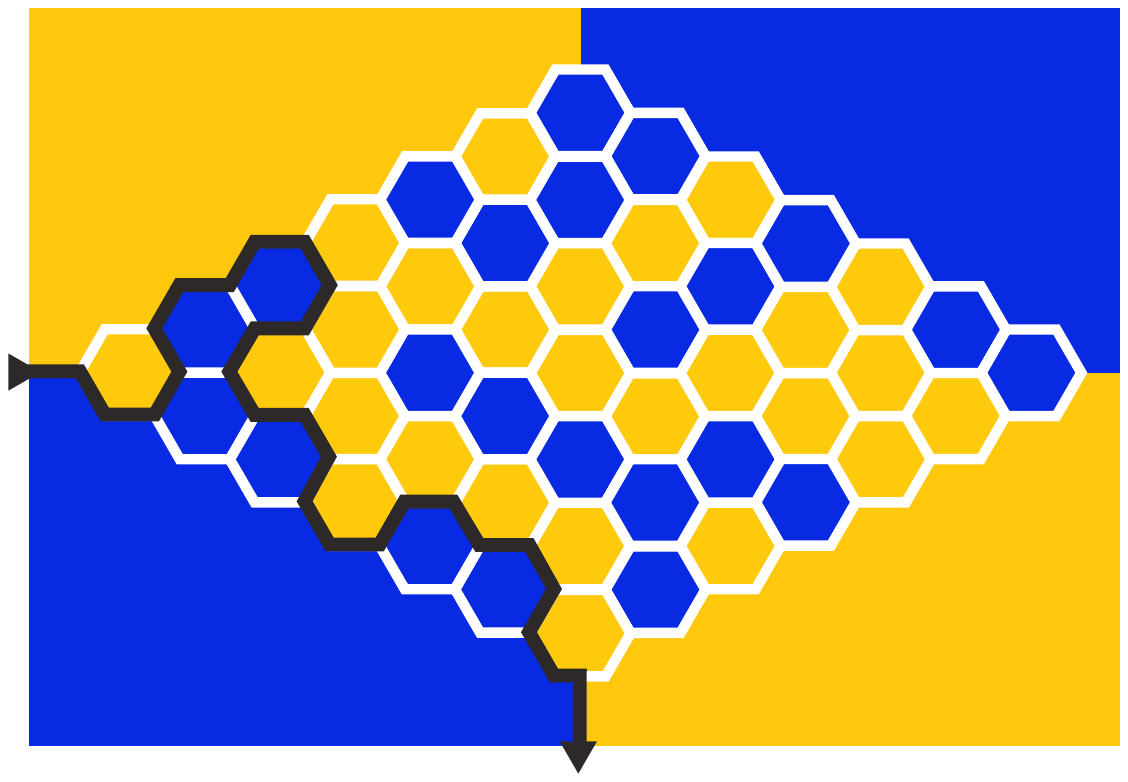}
\end{center}

\smallskip

In summary, to determine the winner of a completed game, start at the west corner and follow the boundary between amber and blue regions.  If the path leads to the north corner of the board, then Bob wins.  Otherwise, the path leads to the south corner of the board, and Alice wins.  

If the board is only partially completed, we can determine the state of the game as follows.  First, color all of the empty hexes amber and determine the winner.  If Bob is the winner, then he already has a winning chain, so the game is over.  Otherwise, color all empty hexes blue to check whether Alice already has a winning chain.  If not, then neither player has won yet, and the game continues.

\section{Random-Turn Hex}

Random-Turn Hex has recently been solved by the probabilists Peres, Schramm, Sheffield, and Wilson; their study was apparently motivated by suggestions of Werner pointing toward a conformal invariance property for Hex played on unusually shaped boards with vanishingly small hexes, based on relations with percolation theory.  One of their main results is the Random-Turn Hex Theorem below.

In a completely filled hex board, a hex is \emph{critical} if when we change the color of the token residing in it, the winner of the game changes.  In other words, a hex is critical if every winning chain contains that hex.  If the board is only partially filled in, then the \emph{probability} of an open hex being critical is the number of fillings of the remainder of the board for which the given hex is critical, divided by the total number of possible fillings, which is $2^N$, where $N$ is the number of empty hexes.

\begin{RandomTurnHex}
\cite{PSSW07}  The set of optimal moves from any position in Random-Turn Hex is the same for both players, and is exactly the set of open hexes whose probability of being critical is maximal.
\end{RandomTurnHex}

The Random-Turn Hex Theorem gives an exact solution to Random-Turn Hex.  From any position, fill in the empty hexes in every possible way and count the number of times that each hex is critical.  The hexes that are critical most often are the optimal moves.  When the number of empty hexes is large, the number of possible fillings is astronomical, so finding the optimal moves exactly is practically impossible.  Nevertheless, the Random-Turn Theorem suggests a Monte Carlo algorithm---if one fills in the board at random a large number of times, then the hexes that are critical most often in this sampling are likely to be optimal or near-optimal.  This algorithm has been implemented by David Wilson in the program Hexamania \cite{Hexamania}.  Wilson's program beats a skilled human player more than half the time, but a person can still beat the computer fairly often, by randomly winning most of the coin flips.  We have implemented a similar algorithm for Bidding Hex, finding not only near-optimal moves but also near-optimal bids, and the resulting program plays exceedingly well.

Cohen independently experimented with an analogous Monte Carlo method for playing traditional alternating move Hex, in which random hexes are filled in with alternating colors, so the end filling has equal numbers of hexes of both colors if the board size is even, and first player has exactly one extra hex in the final filling if the board size is odd \cite{Cohen04, Cohen04b}.

\section{Bidding Hex}  \label{program}
We now describe our artificial intelligence program for playing Bidding Hex.  Given a partially filled board, we must efficiently find a near optimal move and a near optimal bid.  Start with a partially filled board.  For an empty hex $H$, let $L_H$ be the probability that $H$ is filled with the losing color, when the remainder of the board is filled in at random.

\begin{proposition}
The probability that $H$ is not critical is $2 L_H$.
\end{proposition}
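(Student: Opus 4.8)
The plan is to decompose the random filling of the board into two independent pieces: a uniformly random coloring $\sigma$ of all the empty hexes \emph{except} $H$, together with a uniformly random color $c \in \{\text{amber},\text{blue}\}$ for $H$ itself. Writing $\sigma_c$ for the completed board obtained by coloring $H$ with $c$, the first observation is that whether $H$ is critical in $\sigma_c$ does not depend on $c$: by definition $H$ is critical precisely when recoloring the token in $H$ changes the winner, i.e.\ precisely when the winner of $\sigma_{\text{amber}}$ differs from the winner of $\sigma_{\text{blue}}$, a condition referring to $\sigma$ alone. Thus ``$H$ is critical'' is an event depending only on $\sigma$, and it suffices to understand, conditional on $\sigma$, how often $H$ receives the losing color.

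Next I would record the elementary monotonicity fact that underlies everything: if $H$ holds a token of the losing color in a completed board, then $H$ is not critical. Indeed, the winning player already has a winning chain, that chain consists entirely of the winning color and so does not pass through $H$, and recoloring $H$ leaves it intact. Equivalently, using that Hex never ends in a draw (Section~4) so that every completed board has a well-defined winner, recoloring a hex can change the winner only in favor of the color newly placed there; hence in \emph{both} completions $\sigma_{\text{amber}}, \sigma_{\text{blue}}$ a critical $H$ must hold the winning color.

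With these two points the computation is immediate. Fix $\sigma$. If the winners of $\sigma_{\text{amber}}$ and $\sigma_{\text{blue}}$ agree, say both won by player $W$, then $H$ is not critical for either choice of $c$, and $H$ holds the losing color exactly when $c \neq W$, i.e.\ for exactly one of the two values of $c$. If instead the winners disagree, then $H$ is critical for either choice of $c$, and by the previous paragraph $H$ holds the winning color for both values of $c$, hence never the losing color. Averaging over the independent uniform choice of $c$ therefore gives
\[
\Pr[\,H \text{ holds the losing color}\,] \;=\; \tfrac12\,\Pr[\,H \text{ not critical}\,] \;+\; 0 \cdot \Pr[\,H \text{ critical}\,],
\]
which is exactly $L_H = \tfrac12\,\Pr[\,H \text{ not critical}\,]$, the claim.

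There is no real obstacle here; the only things to be careful about are the bookkeeping in the conditioning — that the color of $H$ is independent of $\sigma$ and uniform, so that ``exactly one of two values of $c$'' genuinely contributes a factor $\tfrac12$ — and the appeal to the no-draw theorem, which is what makes ``the winner'' and hence ``the losing color'' and ``critical'' well defined and makes the monotonicity step valid.
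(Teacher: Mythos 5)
Your proof is correct and is essentially the paper's argument in probabilistic clothing: conditioning on the coloring $\sigma$ of the remaining hexes and pairing the two possible colors of $H$ is exactly the paper's bijection (switch the color of $H$) between fillings in which $H$ has the losing color and fillings in which $H$ has the winning color but is not critical, and both versions rest on the same monotonicity fact that a losing-colored hex is never critical. There are no gaps.
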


\begin{proof}
In any filling, if $H$ is of the losing color then it is not critical.  Furthermore, there is a natural bijection between fillings in which $H$ is of the losing color and those in which $H$ is of the winning color but not critical, given by switching the color of $H$.  Therefore, the number of fillings in which $H$ is not critical is exactly twice the number of fillings in which $H$ is of the losing color.
\end{proof}

The proposition shows that the optimal moves are those hexes $H$ such that $L_H$ is as small as possible.  An optimal bid can also be determined in terms of $L_H$, as follows.

\begin{proposition}
Let $H$ be an open hex such that $L_H$ is minimal, from position $v$.  Then an optimal bid for real-valued bidding is the proportion
\[
\delta(v) = \frac{1}{2} - L_H
\]
of the total bidding resources.
\end{proposition}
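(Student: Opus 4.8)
The plan is to combine the formula $\delta(v) = |R^+(v) - R^-(v)|/2$ from Richman's theory with the Random-Turn Hex Theorem and the preceding proposition. First I would pin down $R^+(v)$ and $R^-(v)$ explicitly. Let $w'$ (resp.\ $w$) denote the position obtained from $v$ when Alice (resp.\ Bob) places a token of her (resp.\ his) color on $H$. Since $R = 1 - P$ by Richman's Theorem, maximizing $R(G_\bullet)$ over Bob's moves amounts to minimizing Alice's random-turn win probability, and minimizing $R(G_\bullet)$ over Alice's moves amounts to maximizing it; by the Random-Turn Hex Theorem the optimal move is the same for both players and is a hex of minimal $L_H$, so both of these extrema are attained at $H$. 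Hence $R^+(v) = R(G_w) = 1 - P(G_w)$ and $R^-(v) = R(G_{w'}) = 1 - P(G_{w'})$, so that $\delta(v) = |P(G_{w'}) - P(G_w)|/2$.

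Next I would rewrite $P(G_{w'})$ and $P(G_w)$ in terms of random fillings. By the analysis of random-turn selection games in \cite{PSSW07}, for any position $u$ the optimal-play win probability $P(G_u)$ equals the probability that a uniformly random filling of the empty hexes of $u$ is a win for Alice. Applying this to $u = w'$ and $u = w$, and observing that filling $w'$ uniformly at random is the same as filling $v$ uniformly at random conditioned on $H$ being amber (and likewise for $w$ with $H$ blue), I obtain $P(G_{w'}) = \P(\text{Alice wins}\mid H\text{ amber})$ and $P(G_w) = \P(\text{Alice wins}\mid H\text{ blue})$, with both probabilities taken over the uniform random filling of the empty hexes of $v$.

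Then I would identify $P(G_{w'}) - P(G_w)$ with the probability that $H$ is critical. Coupling the two conditional experiments by drawing the same random coloring $\omega$ of all empty hexes other than $H$: monotonicity of Hex shows that whenever Alice wins with $H$ blue she also wins with $H$ amber, and $H$ is critical exactly when Alice wins with $H$ amber but not with $H$ blue --- an event depending only on $\omega$, not on the color actually drawn for $H$. Hence the probability that $H$ is critical is $\P(\text{Alice wins}\mid H\text{ amber}) - \P(\text{Alice wins}\mid H\text{ blue}) = P(G_{w'}) - P(G_w) \ge 0$, so the absolute value in $\delta(v)$ drops. Combined with the preceding proposition, which gives $\P(H\text{ critical}) = 1 - 2L_H$, this yields $\delta(v) = (1 - 2L_H)/2 = \tfrac12 - L_H$.

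I expect the main obstacle to be the bookkeeping in the middle two steps: carefully justifying that $P(G_u)$ is literally the uniform-random-filling win probability --- this is where the machinery of \cite{PSSW07} must be invoked, and for the sub-positions $w$ and $w'$ rather than just for $v$ --- and setting up the coupling cleanly enough that the identification of ``$H$ critical'' with the event ``Alice wins with $H$ amber but loses with $H$ blue'' is manifestly independent of the color drawn for $H$. By contrast, the identification of $R^\pm(v)$ with the values at $H$ is comparatively routine once the Random-Turn Hex Theorem is in hand.
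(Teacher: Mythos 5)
Your proof is correct and follows essentially the same route as the paper's: the paper's one-sentence argument is exactly the compressed form of your chain (identify $R^{\pm}(v)$ at the optimal hex via Richman's Theorem and the Random-Turn Hex Theorem, equate the optimal-play values with uniform-random-filling win probabilities, and recognize their difference as the criticality probability $1-2L_H$ from the preceding proposition). The only difference is that you spell out the coupling and monotonicity details that the paper leaves implicit.
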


\begin{proof}
The probability that $H$ is critical is $1 - 2 L_H$, and is equal to $R^+(v) - R^-(v)$, the difference between the conditional probability that Alice wins if she gets $H$ and the probability that she wins if Bob gets $H$.  Therefore, $\delta(v) = (1 - 2L_H) /2$, as required.
\end{proof}

Here is our algorithm for finding near optimal bids and moves from a partially filled Hex board.

\begin{enumerate}
\item Fill in the remainder of the board at random a large number of times, and determine the outcome of the game each time.
\item Keep track of the number of times that each open hex is of the losing color.
\item If $H$ is the open hex that is least often of the losing color, bid the integer part of $\frac{1}{2} - L_H$ times the total number of chips in the game.
\item If this bid wins, move in hex $H$.  Otherwise, the opponent has probably overbid.  Take his chips, and let him move wherever he wants.
\end{enumerate}

\noindent A moderately fast desktop computer, by 2008 standards, can check roughly fifty thousand random fillings of an $11 \times 11$ board per second.  A few hundred thousand fillings per move suffice to consistently defeat the best human players today.

\bibliography{math}
\bibliographystyle{amsalpha}

\end{document}